\newcommand{\R}{\mathbb{R}}                                     
\newcommand{\X}{\mathbb{X}}                                     
\newcommand{\Y}{\mathbb{Y}}
\newcommand{\ts}{\hspace*{0.1em}} 
\newcommand{\hlC}[2][]{{#2}}
\newcommand{\hl}[1]{{#1}}
\newcommand{\M}{\mathbb{M}}                                     
\newcommand{\Q}{{\mathcal{Q}}}
\newtheorem{theorem}{Theorem}[section]
\newtheorem{corollary}[theorem]{Corollary}
\newtheorem{definition}[theorem]{Definition}
\theoremstyle{definition}
\newtheorem{example}[theorem]{Example}
\newtheorem{remark}[theorem]{Remark}
\DeclareMathOperator*{\argmin}{arg\,min}
\title{A weak characterization of slow variables in stochastic dynamical systems}
\author[1]{Andreas Bittracher}
\author[1,2]{Christof Sch\"utte}
\affil[1]{Department of Mathematics and Computer Science, Freie Universit\"at Berlin, Germany}
\affil[2]{Zuse Institute Berlin, Germany}
\date{}
\begin{document}
\maketitle

\begin{abstract}
	We present a novel characterization of slow variables for continuous Markov processes that provably preserve the slow timescales.
	These slow variables are known as reaction coordinates in molecular dynamical applications, where they play a key role in system analysis and coarse graining.
	The defining characteristics of these slow variables is that they parametrize a so-called transition manifold, a low-dimensional manifold in a certain density function space that emerges with progressive equilibration of the system's fast variables.
	The existence of said manifold was previously predicted for certain classes of metastable and slow-fast systems. However, in the original work, the existence of the manifold hinges on the pointwise convergence of the system's transition density functions towards it. We show in this work that a convergence in average with respect to the system's stationary measure is sufficient to yield reaction coordinates with the same key qualities.
	This allows one to accurately predict the timescale preservation in systems where the old theory is not applicable or would give overly pessimistic results.
	Moreover, the new characterization is still constructive, in that it allows for the algorithmic identification of a good slow variable.
	The improved characterization, the error prediction and the variable construction are demonstrated by a small metastable system.
	
\end{abstract}

\section{Introduction}

The ability and practice to perform all-atom molecular simulations of more and more complex biochemical systems has led to an unprecedented increase in the available amount of dynamical data about those systems. This has exponentiated the importance to identify good chemical reaction coordinates (RCs), low-dimensional observables of the full system that are associated with the relevant, often slowly-progressing sub-processes. For one, a meaningful RC permits insight into the essential mechanisms and parameters of a reaction, by acting as a filter for the overwhelming complexity of the data. As an example, computing the free energy (also known as the potential of mean force) along such a coordinate is typically used for identifying energy barriers and associated transition states~\cite{Smith1999,Daldrop18}. 
RCs are also essential for the development of accurate reduced dynamical models. The Mori-Zwanzig formalism and related schemes~\cite{Zwa61,PaSt08,ZHS16,Netz18} can be used to derive approximate closed equations of motion of the dynamics projected onto the image space of the RC. Depending on the chosen RC, the essential dynamical properties of the reduced model --- such as transition rates between reactant and product --- may or may not resemble those of the original system~\cite{ZS17}. Finally, accelerated sampling schemes such as metadynamics~\cite{metadynamics-review}, Blue Moon sampling~\cite{CKV05} and umbrella sampling~\cite{Torrie1977} also rely heavily on an accurate RC to guide them efficiently into unexplored territory.

In each of those applications, the result depends crucially on the ``quality'' of the RC, an elusive measure for how well \hlC[C1]{the RC suits the specified task. In most cases, this quality can be brought down to how well the RC ``captures the essential dynamics'', in particular the rates of transitions between reactant and product state  (see also~\cite{Baron16} for an in-depth review on the effect of poorly chosen RCs on different classic rate theories). Due to this ambiguity, the search for universal and mathematically rigorous optimality criteria for RCs remains an active field of research, and numerous new approaches have been suggested during the last decade.} For reactions involving one clearly defined reactant and product state, a in multiple ways ideal RC is the committor function~\cite{Dinner,Best2005}, a one-dimensional observable that in each point describes the probability to hit the product state before returning to the reactant state. As the committor function is notoriously hard to compute, advanced numerical schemes have been developed to either approximate it efficiently~\cite{ElEtAl17}, or find RCs that are equivalent by certain metrics~\cite{Peters06}. Still, the computation of committor-like RCs often remains out of reach for high-dimensional systems.

For systems where the relevant behavior involves transitions between more than two states~\cite{Sirur16}, where the reaction is not adequately described by a transition between isolated states~\cite{Sengupta19}, or where the states are not known or cannot be computed, other optimality criteria must be employed. Here one common approach is to demand the preservation of the system's  longest (equilibration) time scales under projection of the dynamics onto the RC. This leads naturally to a characterization of RCs in terms of the eigenvalues of the system's transfer operator, a widely used mathematical tool for time scale analysis in molecular dynamics and beyond~\cite{KNKWKSN18,deju:99,Schuette1999,CN14,Williams15}.
It is in this setting where the authors and coworkers have previously proposed a novel mathematical framework for the characterization and numerical computation of ideal RCs~\cite{Bittracher2017}. The proposed theory builds on the insight that in many systems, the equilibration of the fast sub-processes over time manifests as the convergence of the system's transition density functions towards a certain low-dimensional manifold in density space, the so-called transition manifold (TM). This convergence is observed even if there is no equivalent low-dimensional structure in state space, such as a transition pathway between isolated states. Any parametrization of the TM then can in theory be used to construct an ideal RC.

The framework demands that the convergence towards the TM must occur for \emph{all} transition density functions, i.e., for every conceivable starting state. In practice however, this rather strong condition is often violated for starting states with high potential energy, as the associated transition density functions may stay far away from any sensible candidate TM for all times. The probability to encounter these states in the canonical ensemble is however exponentially low, and thus should not contribute significantly to the shape of the RC. Indeed, the numerical methods built around parametrizing the TM are able to successfully deal with this problem by heuristically ignoring sparse outliers by tuning the manifold learning algorithm~\cite{BBS18,BKHKS19}.

Still, a rigorous argument for why those outliers can be safely ignored was lacking so far, a gap that the present article aims to fill.
In short, we show that the distance to the TM does not need to be uniformly low for all transition density functions, but that the distance is permitted to scale with the potential energy of the starting state.
The RC received by parametrizing the TM is then of the same quality as in the uniform distance case. This extension to the TM theory will therefore allow to measure the quality of given RCs, and the numerical computation of ideal RCs in systems that been previously deemed unsuitable for the theory.

This paper is structured as follows: 
Section~\ref{sec:good_RCs} reviews the time scale-based definition of good RCs. Section~\ref{sec:weak_reducibility} presents the main contribution of this article, weakened but sufficient conditions for the existence of good RCs. 
In Section~\ref{sec:examples} we give an example of a metastable toy system that fulfills the relaxed but not the original reducibility condition, and demonstrate how the new characterization can improve the quality of error bounds for the dominant timescales. In Section~\ref{sec:outlook}, concluding remarks and an outlook on future work are given.

\section{Good reaction coordinates}
\label{sec:good_RCs}

Before introducing the (generalized) transition manifold framework, we first revisit the fundamental time scale-based definition of good reaction coordinates.

\subsection{Timescales of molecular dynamics}

We consider a \hlC[C5]{time- and space-continuous, reversible and ergodic} Markov process $\mathbf{X}_t$ on a state space $\X\subset\R^n$. \hlC[C5]{In a molecular dynamical system consisting of $N$ atoms, $\X$ often is the Euclidean space describing the three-dimensional positions of all atoms, i.e., $\X=\R^{3N}$ (or $\X=\R^{6N}$ if the atom's momenta are also included). In this case, $\mathbf{X}_t$ is typically} described by a thermostated Hamiltonian dynamics or Langevin dynamics. 

$\mathbf{X}_t$ is fully characterized by its stochastic transition functions $p^t(x,\cdot):\X\rightarrow \R^+$, or, equivalently, by its family of \emph{transfer operators} $\mathcal{T}^t:L^1_\mu \rightarrow L^1_\mu,~t\geq 0$,
$$
\mathcal{T}^t u(x) = \int_\X \frac{\rho(x')}{\rho(x)} p^t(x',x) u(x')\ts dx'.
$$
Here, $\rho$ is the system's (positive) stationary density, which is unique due to the ergodicity of $\mathbf{X}_t$, and $\mu$ is the associated invariant measure.
Operating on $L^1_\mu$, $\mathcal{T}^t$ can be understood as the evolution operator of densities with respect to $\mu$ under the dynamics. 

On $L^1_\mu$, $\mathcal{T}^t$ is a linear Markov operator,~\cite[Chapter~3]{LM13}, and in particular non-expansive. Hence, \hlC[C6]{no eigenvalue of $\mathcal{T}^t$ has absolute value greater than $1$. Due to the uniqueness of the stationary density, the eigenvalue $\lambda_0^t := 1$ is single; the associated unique eigenfunction is $\varphi_0\equiv 1$}.

Furthermore, $\mathcal{T}^t$ is well-defined as an operator $\mathcal{T}^t:L^p_\mu\rightarrow L^p_\mu$ for any $1\leq p \leq \infty$~\cite{BaRo95}. We understand $\mathcal{T}^t$ as an operator on $L^2_\mu$ from now on, where we will be able to exploit the additional Hilbert space structure. In particular, $\mathcal{T}^t$ is self-adjoint with respect to the inner product on $L^2_\mu$~\cite{SchCa92}, hence its point spectrum is real and therefore confined to the interval $(-1,1]$.
\hlC[C7]{Note that $\mathcal{T}^t$ cannot possess the eigenvalue $-1$, as this would imply the existence of an eigenfunction $\widetilde{\varphi}_0\neq \varphi_0$ of $\mathcal{T}^{2t}$ to eigenvalue $1$. This however contradicts the uniqueness of $\varphi_0$ as the only eigenfunction to eigenvalue 1 of $\mathcal{T}^t$ for all $t$.}

In the following we will always order the eigenvalues so that
$$
1=\lambda_0^t >\lambda_1^t\geq \lambda_2^t \geq \cdots~.
$$

The associated eigenfunctions $\varphi_i$ of $\mathcal{T}^t$ form an orthonormal basis of $L^2_\mu$. Hence, on $L^2_\mu$, $\mathcal{T}^t$ admits the decomposition
$$
\mathcal{T}^t = \sum_{i=0}^\infty \lambda_i^t\ts \langle\varphi_i , \cdot \rangle_\mu\ts \varphi_i,
$$
which lets us examine the behavior of $\mathbf{X}_t$ on different time scales. The $i$-th \emph{relaxation rate}, i.e., the \hlC[C8]{exponential rate with} which the $i$-th eigenfunction $\varphi_i$ of $\mathcal{T}^t$ decays, is given by
\begin{equation}
\label{eq:eigenvalues_timescales}
\sigma_i = -\log(\lambda_i^t) / t, \quad i=0,1,2,\ldots,
\end{equation}
independent of $t$. These rates, as well as their inverse, the \emph{relaxation time scales} $t_i = 1/\sigma_i,~i=0,1,2,\ldots$, measure the influence of the different $\varphi_i$ on the long time density transport under $\mathcal{T}^t$, and hence are central quantities of the system.


\subsection{Reaction coordinates}

A reaction coordinate (RC) now is a continuous map $\xi:\X\rightarrow \Y\subset\mathbb{R}^r$, where typically $r\ll n$. Note that the term ``reaction coordinate'' does not imply that $\xi$ describes a reaction of some sort, it simply is a continuous map.
For $y\in \Y$, let $\Sigma_\xi(y)$ be the $y$-level set of $\xi$, i.e.,
$$
\Sigma_\xi(y) = \big\{ x\in\X~\big|~\xi(x)=y\big\}.
$$
Following~\cite{LeLe10}, we now define the \emph{coordinate projection operator} $\Pi_\xi: L^1_\mu\rightarrow L^1_\mu$ for a RC $\xi$ by
\begin{align*}
\big(\Pi_\xi u\big) (x) &= \int_{\Sigma_\xi(\xi(x))} u(x') d\mu_{\xi(x)}(x') \\
&= \frac{1}{\Gamma\big(\xi(x)\big)} \int_{\Sigma_\xi(\xi(x))} u(x') \rho(x') \det\big( \nabla\xi(x')^\intercal \nabla\xi(x')\big)^{-1/2}\ts d\sigma_{\xi(x)}(x'),
\intertext{where~$\Gamma(y)$ is a normalization constant given by}
\Gamma(y) &= \int_{\Sigma_\xi(y)} \rho(x')\det\big(\nabla\xi(x')^\intercal \nabla\xi(x')\big)^{-1/2}\ts d\sigma_y(x'),
\end{align*}
and $d\sigma_y$ denotes the surface measure on $\Sigma_\xi(y)$. $\mu_y$ can be understood as the invariant measure $\mu$ conditioned on $\Sigma_\xi(y)$, and formally is induced by the density 
$$
\rho_y = \frac{\rho}{\Gamma(y)} \big[\det\big(\nabla \xi^\intercal \nabla \xi\big) \big]^{-1/2}.
$$

 As $L^2_\mu\subset L^1_\mu$ due to Hölder's inequality, $\Pi_\xi$ is defined on $L^2_\mu$ as well.
Informally, $\Pi_\xi$ has the effect of averaging an input function $u$ over each level set $\Sigma_\xi(y)$ with respect to $\mu_y$.

It has been shown in~\cite{Bittracher2017} that $\Pi_\xi$ is indeed a projection operator. Moreover, $\Pi_\xi$ is equivalent to the Zwanzig projection operator, described in detail in~\cite{Givon04}, although the latter is typically constructed so that its image are functions over the reduced space $\Y$. For our presentation, however, it is advantageous to define $\Pi_\xi$ to project onto a true subspace of $L^2_\mu$ (namely the subspace of functions that are constant on each $\Sigma_\xi(y),~y\in \Y$).

The \emph{effective transfer operator} $\mathcal{T}^t_\xi:L^2_\mu\rightarrow L^2_\mu$ associated with the RC $\xi$ is now defined by
$$
\mathcal{T}^t_\xi = \Pi_\xi \circ \mathcal{T}^t \circ \Pi_\xi.
$$
Originally considered in~\cite{ZHS16}, $\mathcal{T}^t_\xi$ has been shown to again be self-adjoint and bounded in $L^2_\mu$-norm by $1$~\cite{Bittracher2017}. Hence, the eigenvalues $\lambda^t_{\xi,i},~i=0,1,2,\ldots$ of $\mathcal{T}^t_\xi$ are also confined to the interval $[-1,1]$.

\subsection{Preservation of time scales}

Our characterization of \emph{good} RCs --- originally proposed in~\cite{Bittracher2017} --- now revolves around the central assumption that the relevant part of the dynamics (the ``reaction'') occurs on the slowest time scales of $\mathbf{X}_t$. Moreover, we assume that the time scales of the reaction are well-separated from non-reactive time scales, i.e., $t_0>t_1 \geq \cdots \geq t_d \gg t_{d+1}$ for some $d\in\mathbb{N}$. This is a sensible and commonly made assumption~\cite{Schuette1999,SarichSchuette2012,SarichNoeSchuette2010,variational2013}, as it holds true for many difference classes of chemical and molecular reactions. However, there are relevant molecular systems whose effective behavior cannot be explained by its slowest timescales alone~\cite{MHP17,Wedemeyer02}, and hence valid criticism of the general equivalence of the slow with the relevant time scales exist. Nevertheless, we assume that the reaction in question is associated with the $d$ dominant time scales.

With the goal of preserving the dominant time scales under projection onto the RC, and the close connection between those time scales and the dominant transfer operator eigenvalues~\eqref{eq:eigenvalues_timescales}, we use the following definition of good RCs:

\begin{definition}[Good reaction coordinates,~\cite{Bittracher2017}]
\label{def:goodRC}
Let $\lambda_i^t,~i=0,1,2,\ldots$ and $\lambda_{\xi,i}^t,~i=0,1,2,\ldots$ denote the eigenvalues of $\mathcal{T}^t$ and $\mathcal{T}^t_\xi$, respectively. Let $t_d$ be the last time scale of the system that is relevant to the reaction. Let $\varepsilon>0$.

An RC $\xi:\X\rightarrow \Y$ is called a \emph{$\varepsilon$-good RC}, if for all $t>0$ holds
\begin{equation}
\label{eq:def_good_RC}
|\lambda_i^t - \lambda_{\xi,i}^t| \leq \varepsilon,\quad i=0,1,\ldots,d.
\end{equation}
Informally, we will call $\xi$ a \emph{good RC} if it is $\varepsilon$-good for small $\varepsilon$.
\end{definition}

Alternatively, the following sufficient condition characterizes good RC by the projection error of the dominant eigenfunctions under~$\Pi_\xi$:

\begin{theorem}[\cite{Bittracher2017},~Corollary 3.6]
\label{thm:projection_error_small}
	Let $(\lambda_i^t,\varphi_i),~i=1,2,\ldots$ denote the eigenpairs of $\mathcal{T}^t$. For any given $i$, if 
	$$
	\|\Pi_\xi \varphi_i - \varphi_i\|_{L^2_\mu}\leq \varepsilon,
	$$
	then there is an eigenvalue $\lambda_{\xi,i}^t$ of $\mathcal{T}^t_\xi$ such that
	$$
	\big| \lambda_i^t - \lambda_{\xi,i}^t \big| \leq \frac{\varepsilon}{\sqrt{1-\varepsilon^2}}.
	$$
\end{theorem}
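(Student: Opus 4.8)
The plan is to treat $\psi_i := \Pi_\xi \varphi_i$ as an approximate eigenvector of the self-adjoint operator $\mathcal{T}^t_\xi$ with approximate eigenvalue $\lambda_i^t$, and to invoke the standard residual estimate for self-adjoint operators: if $A$ is self-adjoint and $\psi\neq 0$, then the distance from $\lambda$ to the spectrum $\sigma(A)$ is controlled by the normalized residual,
$$
\operatorname{dist}\big(\lambda,\sigma(A)\big)\leq \frac{\norm{A\psi-\lambda\psi}_{L^2_\mu}}{\norm{\psi}_{L^2_\mu}}.
$$
This follows from the fact that for a self-adjoint operator the resolvent satisfies $\norm{(A-\lambda)^{-1}}=1/\operatorname{dist}(\lambda,\sigma(A))$ whenever $\lambda\notin\sigma(A)$, applied to the identity $\psi=(A-\lambda)^{-1}(A-\lambda)\psi$. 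Everything then reduces to bounding the residual from above and the trial-vector norm from below.

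For the residual, I would write $e_i:=\varphi_i-\Pi_\xi\varphi_i=(I-\Pi_\xi)\varphi_i$, so that $\norm{e_i}_{L^2_\mu}\leq\varepsilon$ by hypothesis. Using that $\Pi_\xi$ is idempotent (hence $\Pi_\xi\psi_i=\psi_i$) and that $\mathcal{T}^t\varphi_i=\lambda_i^t\varphi_i$, a short computation gives
$$
\mathcal{T}^t_\xi\psi_i-\lambda_i^t\psi_i=\Pi_\xi\mathcal{T}^t\psi_i-\lambda_i^t\psi_i=\Pi_\xi\mathcal{T}^t(\varphi_i-e_i)-\lambda_i^t\psi_i=-\Pi_\xi\mathcal{T}^t e_i.
$$
Since $\mathcal{T}^t$ is a Markov operator it is non-expansive on $L^2_\mu$, and $\Pi_\xi$ is a projection, hence also non-expansive; therefore $\norm{\Pi_\xi\mathcal{T}^t e_i}_{L^2_\mu}\leq\norm{e_i}_{L^2_\mu}\leq\varepsilon$. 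For the denominator I would use that $\Pi_\xi$, being the conditional expectation onto functions constant on the level sets $\Sigma_\xi(y)$, is an \emph{orthogonal} projection, so that $\psi_i=\Pi_\xi\varphi_i$ and $e_i=(I-\Pi_\xi)\varphi_i$ are orthogonal. Pythagoras together with $\norm{\varphi_i}_{L^2_\mu}=1$ then yields $\norm{\psi_i}_{L^2_\mu}^2=1-\norm{e_i}_{L^2_\mu}^2\geq 1-\varepsilon^2$.

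Combining the two bounds in the residual estimate gives $\operatorname{dist}\big(\lambda_i^t,\sigma(\mathcal{T}^t_\xi)\big)\leq\varepsilon/\sqrt{1-\varepsilon^2}$, which is the claimed inequality. The main subtlety — and the one point I would treat carefully — is passing from \emph{``a point of the spectrum within the stated distance''} to \emph{``an eigenvalue $\lambda_{\xi,i}^t$''}: the residual estimate as stated only locates a point of $\sigma(\mathcal{T}^t_\xi)$. To name it as an eigenvalue one must argue that the relevant part of the spectrum of $\mathcal{T}^t_\xi$ is discrete (point spectrum), which is available here because $\Pi_\xi$ confines the dynamics to a subspace on which the dominant spectrum is realized by eigenfunctions. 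A secondary, easily overlooked point is verifying that $\Pi_\xi$ is genuinely the $L^2_\mu$-orthogonal projection rather than merely \emph{some} projection, since the Pythagoras step — and hence the precise constant $1/\sqrt{1-\varepsilon^2}$ — relies on orthogonality.
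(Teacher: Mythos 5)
Your proof is correct and is essentially the argument behind the cited result: the paper does not reprove Theorem~\ref{thm:projection_error_small} but imports it from \cite{Bittracher2017}, where it is obtained in exactly this way --- $\Pi_\xi\varphi_i$ as a quasi-eigenvector of the self-adjoint $\mathcal{T}^t_\xi$, the residual $-\Pi_\xi\mathcal{T}^t(I-\Pi_\xi)\varphi_i$ bounded by $\varepsilon$ via non-expansiveness, and the trial-vector norm bounded below by $\sqrt{1-\varepsilon^2}$ via the $L^2_\mu$-orthogonality of $\Pi_\xi$, which does hold here because $\Pi_\xi$ averages against the conditional measures $\mu_y$ of $\mu$ and is therefore the self-adjoint conditional expectation given $\xi$. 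The spectrum-versus-eigenvalue caveat you flag is the only genuine subtlety, and it is settled in this setting because $\mathcal{T}^t$ is (implicitly) compact on $L^2_\mu$, hence so is $\mathcal{T}^t_\xi=\Pi_\xi\mathcal{T}^t\Pi_\xi$, and every nonzero spectral point of a compact self-adjoint operator is an eigenvalue --- which covers the dominant $\lambda_i^t$ for which the bound is actually invoked.
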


\hlC[C10, C11]{
\begin{remark}
By the above theorem, choosing the $d$ dominant eigenfunctions as the $d$ components of $\xi$ results in a ``perfect'' RC. However, this approach may lead to redundancy if the $\varphi_i,~i=1,\ldots,d$ are strongly correlated and can be parametrized by a common, lower-dimensional $\xi$. For example, a system with $d$ metastable sets along a common, one-dimensional transition pathway would possess $d$ dominant eigenfunctions, but a one-dimensional good RC that parametrizes the transition pathway (see \cite[Section 5.2]{Bittracher2017} for a detailed example).

Using eigenfunctions as	RCs was also promoted by Froyland et al~\cite{FGH14a, FGH14b}, for the special case where the timescale separation stems from a pointwise local separation of the dynamics into a slow and a fast part. Just like for the transition manifold approach presented in Section~\ref{sec:weak_reducibility}, the short-time equilibration of the dynamics again plays an important part, but unlike in our approach it is assumed to take place on certain ``fast fibers'' of state space. The transition manifold framework can therefore be considered a generalization of the approach of Froyland et al.
\end{remark}
}

\section{Weak reducibility of stochastic systems}
\label{sec:weak_reducibility}

Definition~\eqref{eq:def_good_RC} is not constructive, in that it allow one to check the quality of a given RC, but does not indicate how to find a good RC algorithmically. 
To this end, we will now derive a reducibility condition that binds the existence of good RCs to the existence of a certain low-dimensional structure in the space of transition density functions. This structure, called the \emph{transition manifold}, can be interpreted as the backbone of the essential dynamics, can be visualized, and ultimately can be used to numerically compute good RCs.

\subsection{Condition for good reaction coordinates based on transfer operator eigenfunctions}

It was shown in~\cite{Bittracher2017} that if for some functions $\hat{\varphi}_i:\Y\rightarrow \R$ the condition 
\begin{equation}
\label{eq:pointwise_condition}
\|\varphi_i - \hat{\varphi}_i\circ \xi \|_\infty \leq \varepsilon,\quad i=0,1,\ldots,d
\end{equation}
holds, then $\xi$ is a $\frac{\varepsilon}{\sqrt{1-\varepsilon^2}}$-good RC by Theorem~\ref{thm:projection_error_small}. In other words, if the dominant eigenfunctions are pointwise almost constant along the level sets of $\xi$, then $\xi$ is a good RC.

It turns out, however, that condition~\eqref{eq:pointwise_condition} is unnecessarily strong. To be  precise, the pointwise approximation implied by the $\|\cdot\|_\infty$-norm can be replaced by the following weaker condition. This was already observed previously~\cite[Remark~4.3]{Bittracher2017}, but has not been proven formally.

\begin{theorem}
\label{thm:goodRCcondition}
Assume that for an RC $\xi:\X\rightarrow \Y$ and some functions $\hat{\varphi}_i:\Y\rightarrow \R,~i=0,1,\ldots,d$ holds
\begin{equation}
\label{eq:characterization_eigenfunctions}
\int_{\Sigma_\xi(y)}\big| \varphi_i(x') - \hat{\varphi}_i(y) \big|\ts d\mu_{y}(x') \leq  \varepsilon
\end{equation}
for all level sets $\Sigma_\xi(y)$ of $\xi$. Then
$$
\| \Pi_\xi\varphi_i-\varphi_i \|_{L^2_\mu}  \leq 2\varepsilon.
$$
\end{theorem}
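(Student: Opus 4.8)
The plan is to insert the comparison functions $\hat\varphi_i\circ\xi$ between $\Pi_\xi\varphi_i$ and $\varphi_i$ and split the error by the triangle inequality. Recall that $\Pi_\xi$ averages its argument over each level set with respect to the conditional probability measure $\mu_y$, so its image consists precisely of the functions that are constant on the level sets of $\xi$; in particular $\Pi_\xi(\hat\varphi_i\circ\xi)=\hat\varphi_i\circ\xi$, and on $\Sigma_\xi(y)$ the function $\Pi_\xi\varphi_i$ equals the constant $\int_{\Sigma_\xi(y)}\varphi_i\,d\mu_y$. I will also use the disintegration of $\mu$ along the level sets that is already built into the construction of $\Pi_\xi$ and $\Gamma$: writing $\nu:=\xi_*\mu$ for the (probability) image measure on $\Y$, every $h\in L^1_\mu$ satisfies $\int_\X h\,d\mu=\int_\Y\big(\int_{\Sigma_\xi(y)}h\,d\mu_y\big)\,d\nu(y)$. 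With this, the triangle inequality gives $\norm{\Pi_\xi\varphi_i-\varphi_i}_{L^2_\mu}\le\norm{\Pi_\xi\varphi_i-\hat\varphi_i\circ\xi}_{L^2_\mu}+\norm{\hat\varphi_i\circ\xi-\varphi_i}_{L^2_\mu}$, and the factor $2$ in the claim will arise from bounding each of the two summands by $\varepsilon$.

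For the first summand I would argue fibrewise. On $\Sigma_\xi(y)$ the function $\Pi_\xi\varphi_i-\hat\varphi_i\circ\xi$ is the constant $\int_{\Sigma_\xi(y)}\big(\varphi_i-\hat\varphi_i(y)\big)\,d\mu_y$, whose modulus is at most $\int_{\Sigma_\xi(y)}\abs{\varphi_i-\hat\varphi_i(y)}\,d\mu_y\le\varepsilon$ by the triangle inequality for integrals together with hypothesis~\eqref{eq:characterization_eigenfunctions}; crucially this bound is uniform in $y$. Integrating the squared constant against the probability measure $\nu$ then yields $\norm{\Pi_\xi\varphi_i-\hat\varphi_i\circ\xi}_{L^2_\mu}\le\varepsilon$. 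This is exactly the step that exploits the averaged nature of the assumption: the level-set averaging performed by $\Pi_\xi$ commutes with the average appearing in~\eqref{eq:characterization_eigenfunctions}, so no pointwise control of $\varphi_i$ is needed here.

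For the second summand, disintegration gives $\norm{\hat\varphi_i\circ\xi-\varphi_i}_{L^2_\mu}^2=\int_\Y\big(\int_{\Sigma_\xi(y)}\abs{\varphi_i-\hat\varphi_i(y)}^2\,d\mu_y\big)\,d\nu(y)$, so since $\nu$ is a probability measure it suffices to control the inner fibre integral by $\varepsilon^2$ uniformly in $y$. This is the step I expect to be the main obstacle: hypothesis~\eqref{eq:characterization_eigenfunctions} bounds the fibre integral of the \emph{first} power of $\abs{\varphi_i-\hat\varphi_i(y)}$, whereas the $L^2_\mu$-norm asks for the fibre integral of its \emph{square}, and Jensen's inequality runs the wrong way to pass from one to the other. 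I would therefore use~\eqref{eq:characterization_eigenfunctions} in its fibrewise root-mean-square form, in which case the inner integral is $\le\varepsilon^2$ directly and the summand is $\le\varepsilon$; alternatively, an $L^\infty$ bound on the eigenfunctions would permit an interpolation, but only at the weaker rate $O(\sqrt\varepsilon)$. Granting the fibrewise $L^2$ control, the two summands combine through the triangle inequality to give $\norm{\Pi_\xi\varphi_i-\varphi_i}_{L^2_\mu}\le2\varepsilon$, which is the assertion; one could even shave the constant to $1$ by invoking the self-adjointness of $\Pi_\xi$ and the optimality of orthogonal projection, but the symmetric split above is what produces the stated factor $2$. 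The only genuinely delicate point is thus matching the fibre norm in the hypothesis to the $L^2_\mu$ norm in the conclusion; everything else is the bookkeeping of the split and the disintegration identity.
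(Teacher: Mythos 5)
Your decomposition is exactly the paper's: insert $\hat\varphi_i\circ\xi$, split by the triangle inequality, and bound each summand by $\varepsilon$. Your treatment of the first summand (on each fibre, $\Pi_\xi\varphi_i-\hat\varphi_i\circ\xi$ is the constant $\int_{\Sigma_\xi(y)}(\varphi_i-\hat\varphi_i(y))\,d\mu_y$, whose modulus is at most $\varepsilon$ uniformly in $y$, hence the $L^2_\mu$-norm is at most $\varepsilon$) is the paper's argument essentially verbatim and is correct.

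The step you flag as the main obstacle is genuine, and the paper does not actually overcome it: in its bound for the second summand the paper writes
$$
\int_{\Sigma_\xi(y)}\big|\hat\varphi_i(y)-\varphi_i(x')\big|^2\,d\mu_y(x')\;\leq\;\Big(\int_{\Sigma_\xi(y)}\big|\hat\varphi_i(y)-\varphi_i(x')\big|\,d\mu_y(x')\Big)^{2},
$$
which is the reverse of Jensen's (Cauchy--Schwarz) inequality for the probability measure $\mu_y$ and is false in general --- a deviation equal to $M$ on a fibre subset of $\mu_y$-measure $\varepsilon/M$ and $0$ elsewhere satisfies the $L^1(\mu_y)$ hypothesis~\eqref{eq:characterization_eigenfunctions} while its fibrewise second moment is of order $M\varepsilon$, which is unbounded as $M\to\infty$. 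So the theorem as stated does not follow from~\eqref{eq:characterization_eigenfunctions} by this route, and the two repairs you propose are the natural ones: either read~\eqref{eq:characterization_eigenfunctions} in the fibrewise root-mean-square sense, which makes both summands at most $\varepsilon$ and yields the constant $2$ (or even $1$, via the orthogonality of $\Pi_\xi$, as you note), or retain the $L^1(\mu_y)$ hypothesis and add an $L^\infty$ bound on $\varphi_i-\hat\varphi_i\circ\xi$, which only yields a rate of order $\sqrt{\varepsilon}$. Your proposal is therefore not a complete proof of the statement as written, but the gap you identify is real and sits, unacknowledged, in the paper's own proof.
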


\begin{remark}
In words, for a specific value $y\in\Y$, the dominant eigenfunctions $\varphi_i$ do not need to be almost constant everywhere on $\Sigma_\xi(y)$, but only the average deviation of $\varphi_i$ from some value $\hat{\varphi}(y)$ along $\Sigma_\xi(y)$, weighted by $\mu_y$, must be small.
Hence, $\xi$ may be a good RC even if $\varphi_i(x')$ substantially deviates from the value $\hat{\varphi}(y)$, as long as it is in regions where the measure $\mu_y$ is small. These are precisely the regions of state space that are lowly-populated in the canonical ensemble, and thus are statistically irrelevant.
\end{remark}

\begin{proof}[Proof of Theorem~\ref{thm:goodRCcondition}]
The projection error is
$$
\|\Pi_\xi\varphi_i - \varphi_i \|_{L^2_\mu} \leq \|\Pi_\xi\varphi_i - (\hat{\varphi}_i\circ \xi ) \|_{L^2_\mu} + \|(\hat{\varphi}_i\circ \xi ) - \varphi_i \|_{L^2_\mu} .
$$
For the first summand, consider
\begin{align*}
\big(\Pi_\xi\varphi_i\big)(x) &= \int_{\Sigma_\xi(\xi(x))} \varphi_i(x') d\mu_{\xi(x)}(x') \\
&= \int_{\Sigma_\xi(\xi(x))} \Big(\hat{\varphi}_i\big(\underbrace{\xi(x')}_{=\xi(x)}\big) + \varphi_i(x')  - \hat{\varphi}_i \big(\xi(x')\big)\Big) d\mu_{\xi(x)}(x') \\
&= \hat{\varphi}_i\big(\xi(x)\big) + \int_{\Sigma_\xi(\xi(x))} \Big( \varphi_i(x')  - \hat{\varphi}_i \big(\xi(x')\big)\Big) d\mu_{\xi(x)}(x'),
\intertext{and hence}
\|\Pi_\xi\varphi_i - (\hat{\varphi}_i\circ \xi) \|_{L^2_\mu}^2 & \leq\int_\X \Big( \underbrace{ \int_{\Sigma_\xi(\xi(x))} \big| \varphi_i(x') - \hat{\varphi}_i\big(\xi(x')\big) \big| d\mu_{\xi(x)}(x')}_{\leq \varepsilon}\Big)^2 d\mu(x) \\
&\leq \varepsilon^2 \int_\X d\mu(x) = \varepsilon^2.
\end{align*}
For the second summand, we get with the co-area formula~\cite{Evans15}
\begin{align*}
\|(\hat{\varphi}_i\circ \xi ) - \varphi_i \|_{L^2_\mu}^2  &= \int_{\Y} \int_{\Sigma_\xi(y)} \big| \hat{\varphi}_i\big(\xi(x')) - \varphi_i(x') \big|^2 \ts d\mu_y(x') \ts \Gamma(y)\ts dy \\
&\leq \int_{\Y} \Big(\underbrace{\int_{\Sigma_\xi(y)} \big| \hat{\varphi}_i\big(\xi(x')) - \varphi_i(x') \big| \ts d\mu_y(x')}_{\leq\varepsilon}\Big)^2 \ts \Gamma(y)\ts dy \\
&\leq \varepsilon^2 \int_{\Y} \Gamma(y)\ts dy = \varepsilon^2.
\end{align*}
\end{proof}

\subsection{Weak reducibility and weak transition manifolds}

From the abstract condition~\eqref{eq:characterization_eigenfunctions} of good RCs, one can now derive a constructive condition for the existence of a good RC.
We will also repeat the strong version of this condition, based on~\eqref{eq:pointwise_condition}, which was originally derived in~\cite{Bittracher2017}.

The parametrizations of certain manifolds will play a central role in our constructions. 
\hlC[C12]{Specifically, we consider the special class of manifolds $\M\subset L^1$ for which a compact and connected set $\Y\subset \R^r$, as well as a homeomorphism $\mathcal{E}:\M\rightarrow\Y$ exists, such that
\begin{equation}
	\label{eq:manifold_parametrization}
	\M = \mathcal{E}^{-1}(\Y).
\end{equation}
$\Y$ will later become the image space of our constructed RC.
}

For a fixed lag time $\tau>0$, we now call the set of functions
$$
\widetilde{\M} =\big \{ p^\tau(x,\cdot)~|~x\in\X \big\}\subset L^1
$$
the \emph{fuzzy transition manifold}. Note that $\widetilde{\M}$ is not a manifold; the reason behind the choice of name will however soon become clear. \hlC[C12]{Now, for any manifold $\M\subset \widetilde{\M}$ of form~\eqref{eq:manifold_parametrization}}, define the projection onto $\M$ by
\begin{equation}
\label{eq:Qprojection}
\Q:\X \rightarrow \M, \quad x \mapsto \argmin_{f\in\M} \| f - p^\tau(x,\cdot) \|_{L^2_{1/\mu}}.
\end{equation}

\begin{definition}
\label{def:reducibility}
We call the system \emph{strongly $(\varepsilon,r,\tau)$-reducible}, if there exists a manifold $\M\subset \widetilde{\M}$ \hlC[C12]{of form~\eqref{eq:manifold_parametrization}} so that for all $x\in\X$
\begin{equation}
\label{eq:stronly_reducible}
\big\| \Q(x) - p^\tau(x,\cdot) \big\|_{L^2_{1/\mu}} \leq \varepsilon.
\end{equation}
We call any such $\M$ a \emph{strong transition manifold}.

We call the system \emph{weakly $(\varepsilon,r,\tau)$-reducible}, if there exists a manifold $\M\subset \widetilde{\M}$ \hlC[C12]{of form~\eqref{eq:manifold_parametrization}} so that for all $x\in\X$
\begin{equation}
\label{eq:weakly_reducible}
\int_{\Sigma_\Q(\Q(x))} \big\| \Q(x') - p^\tau(x',\cdot) \big\|_{L^2_{1/\mu}} \ts d\mu_{\Q(x)}(x') \leq \varepsilon,
\end{equation}
where $\Sigma_\Q(f)$ is the $f$-level set of $\Q$. We call any such $\M$ a \emph{weak transition manifold}.
\end{definition}

\hlC[C13]{
\begin{example}
As an illustration of the core idea behind the TM construction, we give a simple example of a metastable system with a strong TM, originally published in~\cite{BKHKS19}.

Consider a two-dimensional system described by the overdamped Langevin equation
\begin{equation}
\label{eq:overdampedLangevin}	
d\mathbf{X}_t = -\nabla V(\mathbf{X}_t)\ts dt+ \sqrt{2\beta^{-1}} d\mathbf{W}_t,
\end{equation}
where $V$ is the potential energy function and $\mathbf{W}_t$ is a Wiener diffusion process scaled by the inverse temperature $\beta \in \R^+$. Now suppose that $V$ possesses two local energy wells, connected by a linear, one-dimensional transition path, such as in Figure~\ref{fig:Transition manifold concept}~(left).
The ``reaction'' in this system is the rare transition from one well to the other. Hence, an intuitively good RC is the horizontal coordinate of a point, $\xi(x)=x_1$, as it describes the progress of $x$ along the transition pathway.

The key insight now is that, if the lag time $\tau$ was chosen long enough for a typical trajectory to move to one of the metastable sets, then the transition densities $p^\tau(x,\cdot)\in L^1$ also essentially depend only on the progress of $x$ along the transition path. The reason is that the $p^\tau(x,\cdot)$ are essentially convex combinations of two Gaussians\footnote{To be precise, the $p^\tau(x,\cdot)$ are approximately convex combinations of the quasi-stationary densities~\cite{gesua_jump_2016} of the metastable sets, that here however resemble Gaussians.} centered in the energy minima $A$ and $B$,
$$
p^\tau(x,\cdot)\approx c(x)\rho_A(\cdot) + (1-c(x)) \rho_B(\cdot)
$$
 with the convex factor $c(x)$ determined by the progress of the starting point $x$ along the transition path. This is represented in Figure~\ref{fig:Transition manifold concept}~(right) by the fact that the transition densities for each gray and white starting point, respectively, concentrate around one point each in $L^1$. Hence, overall, the fuzzy TM $\widetilde{\M}$ concentrates around a one-dimensional manifold in $L^1$. This manifold is therefore a strong TM.
 
An example of a system with only a weak TM will be discussed in detail in Section~\ref{sec:examples}.

\begin{figure}
\centering
\includegraphics[width=\textwidth]{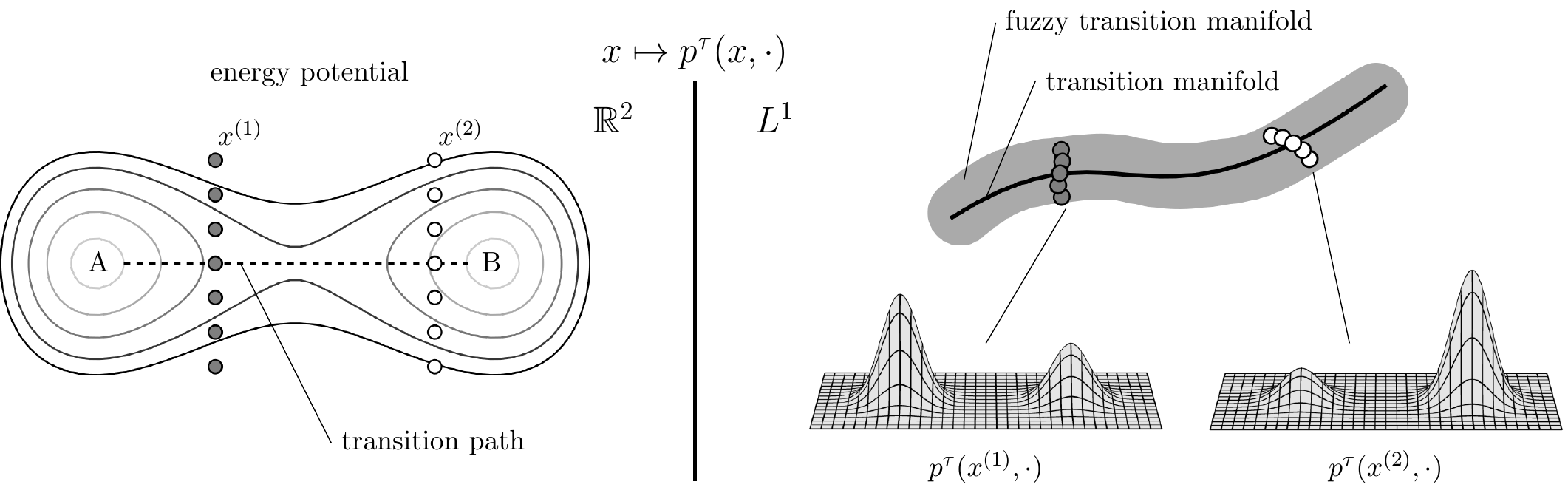}
\caption{\hlC[C13]{Illustration of the transition manifold concept for metastable systems. Left: energy potential of a two-dimensional metastable system. Right: Sketch of the (fuzzy) TM for this system. Starting points $x$ with the same progress along the transition path get mapped to approximately the same density under the map $x\mapsto p^\tau(x,\cdot)$. Geometrically, this means that the fuzzy TM concentrates around a one-dimensional manifold in~$L^1$.}}
\label{fig:Transition manifold concept}
\end{figure}

\end{example}
}

\begin{remark}
Note that we slightly deviate from the original definition of the transition manifold in~\cite{Bittracher2017} by requiring that $\M\subset\widetilde{\M}$ instead of only $\M\subset L^1$.
Also note that $\Q$ is now defined on  $\X$ and not on $\widetilde{\M}$ as originally in~\cite{Bittracher2017}. The interpretation of $\Q$ as ``closest point projection onto $\M$'' is still valid, however.
\end{remark}

Condition~\eqref{eq:stronly_reducible} indicates whether the fuzzy TM $\widetilde{\M}$ clusters $\varepsilon$-closely around an actual manifold $\M$ with respect to the $L^2_{1/\rho}$-norm. Again, condition~\eqref{eq:weakly_reducible} represents a relaxation of this condition, as the integral introduces a weighting with respect to $d\mu_{\Q(x)}$. Informally speaking, for points $x'$ with $\rho(x')=\mathcal{O}(\varepsilon)$, a distance $\big\|\Q(x') - p^\tau(x',\cdot)\big\|_{L^2_{1/\mu}} = \mathcal{O}(1)$ is now permitted without violating the reducibility condition.

It was shown in~\cite{Bittracher2017} that strongly reducible systems possess good RCs. The following theorem now shows that weakly reducible systems still possess good RCs. It characterizes $\Q$ as a good ``$\M$-valued RC'' (cf.~\eqref{eq:characterization_eigenfunctions}):

\begin{theorem}
\label{thm:MvaluedRC}
Let the system be weakly $(\varepsilon,r,\tau)$-reducible. Then for each eigenpair $(\lambda_i^\tau,\varphi_i)$ of the transfer operator $\mathcal{T}^\tau$  there exists a map $\tilde{\varphi}_i:\M\rightarrow \R$ so that for all $x\in \X$
$$
\int_{\Sigma_\Q(\Q(x))} \big| \varphi_i(x') - \tilde{\varphi}_i\big(\Q(x')\big) \big|\ts d\mu_{\Q(x)}(x') \leq \frac{\varepsilon}{|\lambda_i^\tau|}.
$$
\end{theorem}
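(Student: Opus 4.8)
\section*{Proof proposal for Theorem~\ref{thm:MvaluedRC}}

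The plan is to reduce the statement, via Theorem~\ref{thm:goodRCcondition}'s integral condition, to the weak reducibility estimate~\eqref{eq:weakly_reducible} by expressing the eigenfunctions $\varphi_i$ directly through the transition densities $p^\tau(x,\cdot)$. The essential ingredient is the reversibility of $\mathbf{X}_t$: detailed balance $\rho(x')p^\tau(x',x) = \rho(x)p^\tau(x,x')$ turns the defining integral of $\mathcal{T}^\tau$ into $\mathcal{T}^\tau u(x) = \int_\X p^\tau(x,x')\,u(x')\,dx'$, i.e.\ the transfer operator coincides with the Koopman operator on $L^2_\mu$. Applying this to the eigenpair $(\lambda_i^\tau,\varphi_i)$ yields the pointwise identity
$$
\varphi_i(x) = \frac{1}{\lambda_i^\tau}\int_\X p^\tau(x,x')\,\varphi_i(x')\,dx' = \frac{1}{\lambda_i^\tau}\,\big\langle p^\tau(x,\cdot),\, \varphi_i\rho\big\rangle_{L^2_{1/\mu}},
$$
where the last equality is just the observation that inserting the weight $\rho/\rho$ converts the Lebesgue pairing into the $L^2_{1/\mu}$ inner product. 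This is the representation that links the object we want to control (the eigenfunction) to the object the hypothesis controls (the transition density).

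With this in hand I would \emph{define} the candidate map by the same linear functional restricted to $\M$,
$$
\tilde{\varphi}_i:\M\rightarrow\R,\qquad \tilde{\varphi}_i(f) = \frac{1}{\lambda_i^\tau}\,\big\langle f,\, \varphi_i\rho\big\rangle_{L^2_{1/\mu}},
$$
so that $\tilde{\varphi}_i(\Q(x'))$ uses exactly the same functional evaluated at the projected density. Subtracting the two representations gives, for every $x'\in\X$,
$$
\varphi_i(x') - \tilde{\varphi}_i\big(\Q(x')\big) = \frac{1}{\lambda_i^\tau}\,\big\langle p^\tau(x',\cdot) - \Q(x'),\, \varphi_i\rho\big\rangle_{L^2_{1/\mu}},
$$
and a single application of Cauchy--Schwarz in $L^2_{1/\mu}$ bounds this by $|\lambda_i^\tau|^{-1}\,\|p^\tau(x',\cdot)-\Q(x')\|_{L^2_{1/\mu}}\,\|\varphi_i\rho\|_{L^2_{1/\mu}}$. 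Here the normalization is exactly right: $\|\varphi_i\rho\|_{L^2_{1/\mu}}^2 = \int_\X \varphi_i^2\rho\,dx = \|\varphi_i\|_{L^2_\mu}^2 = 1$, since the $\varphi_i$ are $L^2_\mu$-orthonormal. Thus the weight $\varphi_i\rho$ drops out with constant $1$, leaving the clean pointwise bound $|\varphi_i(x') - \tilde{\varphi}_i(\Q(x'))| \le |\lambda_i^\tau|^{-1}\|p^\tau(x',\cdot)-\Q(x')\|_{L^2_{1/\mu}}$.

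Finally I would integrate this inequality over the level set $\Sigma_\Q(\Q(x))$ against $d\mu_{\Q(x)}$ and invoke the weak reducibility condition~\eqref{eq:weakly_reducible} to obtain
$$
\int_{\Sigma_\Q(\Q(x))} \big| \varphi_i(x') - \tilde{\varphi}_i\big(\Q(x')\big) \big|\,d\mu_{\Q(x)}(x') \le \frac{1}{|\lambda_i^\tau|}\int_{\Sigma_\Q(\Q(x))} \big\|\Q(x') - p^\tau(x',\cdot)\big\|_{L^2_{1/\mu}}\,d\mu_{\Q(x)}(x') \le \frac{\varepsilon}{|\lambda_i^\tau|},
$$
which is the claim. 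I expect the only genuinely delicate point to be the first step: establishing the Koopman representation of $\varphi_i$, i.e.\ correctly using reversibility to rewrite $\mathcal{T}^\tau$ in terms of $p^\tau(x,\cdot)$ and matching the Lebesgue pairing to the $L^2_{1/\mu}$ geometry in which the transition manifold lives. Everything after that is a one-line Cauchy--Schwarz together with the orthonormality-driven fact $\|\varphi_i\rho\|_{L^2_{1/\mu}}=1$; one should also note in passing that the division by $\lambda_i^\tau$ is harmless because $\lambda_i^\tau\neq 0$ for the relevant dominant eigenvalues, which is why the factor $|\lambda_i^\tau|^{-1}$ appears in the bound.
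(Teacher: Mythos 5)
Your proof is correct, and it reaches the bound by a cleaner route than the paper, even though both arguments hinge on the same two ingredients: the reversibility identity $\lambda_i^\tau\varphi_i(x)=\int_\X p^\tau(x,x'')\varphi_i(x'')\ts dx''$ and the duality between $L^2_\mu$ and $L^2_{1/\mu}$. The paper does not define $\tilde{\varphi}_i$ as a linear functional; instead it uses the assumption $\M\subset\widetilde{\M}$ to pick, for each $x$, a preimage $q(x)\in\X$ with $\Q(x)=p^\tau(q(x),\cdot)$, sets $\tilde{\varphi}_i(\Q(x)):=\varphi_i(q(x))$, and then bounds $\int_{\Sigma_\Q(\Q(x))}|\varphi_i(x')-\varphi_i(q(x))|\ts d\mu_{\Q(x)}(x')$ by writing both values via the reversibility identity, swapping the order of integration, applying H\"older's inequality $\|fg\|_{L^1}\le\|f\|_{L^2_\mu}\|g\|_{L^2_{1/\mu}}$ to the level-set-averaged difference of kernels, and only then pulling the norm inside the integral with the triangle inequality. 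Your version replaces all of this with the observation that $\varphi_i(x')$ and $\tilde{\varphi}_i(\Q(x'))$ are evaluations of one and the same bounded linear functional $\frac{1}{\lambda_i^\tau}\langle\cdot,\varphi_i\rho\rangle_{L^2_{1/\mu}}$ at $p^\tau(x',\cdot)$ and at $\Q(x')$ respectively, so a single pointwise Cauchy--Schwarz gives $|\varphi_i(x')-\tilde{\varphi}_i(\Q(x'))|\le|\lambda_i^\tau|^{-1}\|p^\tau(x',\cdot)-\Q(x')\|_{L^2_{1/\mu}}$ and integrating over the level set finishes; note that on $\widetilde{\M}$ your functional agrees with the paper's definition, since $\tilde{\varphi}_i(p^\tau(q,\cdot))=\varphi_i(q)$ by the eigenfunction relation. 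What your approach buys: a pointwise rather than level-set-averaged intermediate estimate, no interchange of integrals to justify, and no need for the preimage map $q$ or the hypothesis $\M\subset\widetilde{\M}$ --- any $\M$ whose elements lie in $L^2_{1/\mu}$ would do, which is implicit in Definition~\ref{def:reducibility} anyway. The only hypothesis you should flag explicitly is $\lambda_i^\tau\neq 0$, which the paper needs as well and which is already implicit in the form of the claimed bound.
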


\begin{proof}
As $\M\subset \widetilde{\M}$, for $x\in\X$ we can choose $q(x)\in \X$ so that $\Q(x) = p^t\big(q(x),\cdot\big)$. Let $\tilde{\varphi}_i:\M\rightarrow \R$ be defined by
$$
\tilde{\varphi}_i\big(\Q(x)\big) = \varphi_i\big( q(x)\big).
$$
Then
\begin{align*}
\int_{\Sigma_\Q(\Q(x))} \big| \varphi_i(x') - \tilde{\varphi}_i\big(\Q(x')\big) \big|\ts d\mu_{\Q(x)}(x') &=
\int_{\Sigma_\Q(\Q(x))} \big| \varphi_i(x') - \tilde{\varphi}_i\big(\Q(x)\big) \big|\ts d\mu_{\Q(x)}(x') \\
&= \int_{\Sigma_\Q(\Q(x))} \big| \varphi_i(x') - \varphi_i\big(q(x)\big) \big|\ts d\mu_{\Q(x)}(x') =: (\star)
\end{align*}

As the system is reversible, the detailed balance condition~$\rho(x)p^\tau(x,x'') = \rho(x'')p^\tau(x'',x)$ holds. Hence, the eigenfunctions $\varphi_i$ of $\mathcal{T}^\tau$ have the property
$$
\lambda_i^\tau \varphi_i = \mathcal{T}^\tau\varphi_i = \int_\X \frac{\rho(x'')}{\rho(x)}p^\tau(x'',\cdot) \varphi_i(x'')~dx'' = \int_\X \varphi_i(x'') p^\tau(\cdot,x'')\ts dx'',
$$
and thus
\begin{align*}
(\star) &= \int_{\Sigma_\Q(\Q(x))}  \frac{1}{|\lambda_i^\tau|} \bigg|\int_\X \varphi_i(x'') \Big( p^\tau(x',x'') - p^\tau\big(q(x),x''\big)\Big)\ts dx''\bigg|\ts d\mu_{\Q(x)}(x').
\intertext{Swapping integrals gives}
(\star)&\leq \frac{1}{|\lambda_i^\tau|} \int_\X \big|\varphi_i(x'')\big|  \int_{\Sigma_\Q(\Q(x))} \Big| p^\tau(x',x'') - p^\tau\big(q(x),x''\big) \Big|\ts d\mu_{\Q(x)}(x')\ts dx'',
\intertext{and with Hölder's inequality, $\|fg\|_{L^1}\leq \|f\|_{L^2_\mu} \|g\|_{L^2_{1/\mu}}$, we get}
&\leq \frac{1}{|\lambda_i^\tau|} \underbrace{\|\varphi_i\|_{L^2_\mu}}_{=1} \ts \bigg\|  \int_{\Sigma_\Q(\Q(x))} \Big| p^\tau(x',\cdot) - p^\tau\big(q(x),\cdot\big) \Big|\ts d\mu_{\Q(x)}(x') \bigg\|_{L^2_{1/\mu}}.
\end{align*}
Applying triangle inequality and using $p^\tau\big(q(x),\cdot\big)=\Q(x)$ gives

\begin{align*}
(\star)&\leq \frac{1}{|\lambda_i^\tau|} \int_{\Sigma_\Q(\Q(x))} \Big\| p^t(x',\cdot) - p^t\big(q(x),\cdot\big) \Big\|_{L^2_{1/\mu}}\ts d\mu_{\Q(x)}(x') \\
&= \frac{1}{|\lambda_i^\tau|} \int_{\Sigma_\Q(\Q(x))} \big\|p^t(x',\cdot) - \underbrace{\Q(x)}_{=\Q(x')}\big\|_{L^2_{1/\mu}}\ts d\mu_{\Q(x)}(x').
\end{align*}
By our assumption, this integral is at most $\varepsilon$. Hence,
$$
(\star) \leq \frac{\varepsilon}{|\lambda_i^\tau|}.
$$
\end{proof}

As the last step, we can now construct from $\Q$ an $r$-dimensional RC that meets the condition~\eqref{eq:def_good_RC}:
\begin{corollary}
Let the system be weakly $(\varepsilon,r,\tau)$-reducible. Let $\mathcal{E}:\M\rightarrow\R^r$ be any parametrization of the transition manifold $\M$. Then for the RC
\begin{equation}
\label{eq:idealRC}
\xi:\X \rightarrow \R^r,\quad x \mapsto \mathcal{E}\big(\Q(x)\big)
\end{equation}
and the eigenpairs $(\lambda_i^\tau,\varphi_i)$ of $\mathcal{T}^\tau$ holds 
\begin{equation}
\label{eq:projection_error_eigenvalue}
\|\Pi_\xi\varphi_i - \varphi_i\|_{L^2_\mu} \leq \frac{2\varepsilon}{|\lambda_i^\tau|}.
\end{equation}
\end{corollary}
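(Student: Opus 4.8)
The plan is to reduce this corollary to the two theorems already established, with the parametrization $\mathcal{E}$ serving only to translate the $\M$-valued statement of Theorem~\ref{thm:MvaluedRC} into the $\R^r$-valued language of Theorem~\ref{thm:goodRCcondition}. Writing $\Y := \mathcal{E}(\M)$ for the image space of $\xi = \mathcal{E}\circ\Q$, the essential observation is that, since $\mathcal{E}$ is a homeomorphism and hence injective, the level sets of $\xi$ coincide exactly with those of $\Q$: for $y\in\Y$ one has $\Sigma_\xi(y) = \Sigma_\Q\big(\mathcal{E}^{-1}(y)\big)$, and consequently the conditional measures agree, $\mu_y = \mu_{\Q(x)}$ for any $x$ with $\xi(x)=y$.

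First I would invoke Theorem~\ref{thm:MvaluedRC} to obtain, for each eigenpair $(\lambda_i^\tau,\varphi_i)$, the map $\tilde{\varphi}_i:\M\rightarrow\R$ satisfying the stated average bound $\varepsilon/|\lambda_i^\tau|$ on every level set of $\Q$. I would then define the candidate functions on the image space by pushing $\tilde{\varphi}_i$ through the parametrization, $\hat{\varphi}_i := \tilde{\varphi}_i\circ\mathcal{E}^{-1}:\Y\rightarrow\R$, so that $\hat{\varphi}_i\circ\xi = \tilde{\varphi}_i\circ\Q$ holds identically on $\X$.

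Next I would verify that the pair $(\xi,\hat{\varphi}_i)$ satisfies the hypothesis~\eqref{eq:characterization_eigenfunctions} of Theorem~\ref{thm:goodRCcondition} with tolerance $\varepsilon/|\lambda_i^\tau|$. Fixing $y\in\Y$ and any $x\in\Sigma_\xi(y)$, the level-set identity lets me rewrite the integral over $\Sigma_\xi(y)$ against $\mu_y$ as an integral over $\Sigma_\Q(\Q(x))$ against $\mu_{\Q(x)}$. The crucial simplification is that $\Q$ is constant on its own level set, so for every $x'\in\Sigma_\Q(\Q(x))$ one has $\tilde{\varphi}_i\big(\Q(x')\big) = \tilde{\varphi}_i\big(\Q(x)\big) = \hat{\varphi}_i(y)$; the integrand $|\varphi_i(x')-\hat{\varphi}_i(y)|$ therefore coincides with $|\varphi_i(x')-\tilde{\varphi}_i(\Q(x'))|$, whose integral is bounded by $\varepsilon/|\lambda_i^\tau|$ by Theorem~\ref{thm:MvaluedRC}. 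Applying Theorem~\ref{thm:goodRCcondition} with $\varepsilon$ replaced by $\varepsilon/|\lambda_i^\tau|$ then yields the claimed bound $\|\Pi_\xi\varphi_i-\varphi_i\|_{L^2_\mu}\leq 2\varepsilon/|\lambda_i^\tau|$.

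I expect no deep obstacle here, since the entire content has been front-loaded into Theorem~\ref{thm:MvaluedRC}; what remains is the bookkeeping of confirming that the homeomorphism $\mathcal{E}$ preserves level sets and conditional measures and merely relabels the constant value $\tilde{\varphi}_i(\Q(x))$ as $\hat{\varphi}_i(y)$. The one point demanding a line of care is that $\hat{\varphi}_i(y)$ be well defined independently of the representative $x\in\Sigma_\xi(y)$ chosen, which is immediate because $\Q(x)=\mathcal{E}^{-1}(y)$ is the same for all such $x$.
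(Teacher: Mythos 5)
Your proposal is correct and follows essentially the same route as the paper's own proof: define $\hat{\varphi}_i := \tilde{\varphi}_i\circ\mathcal{E}^{-1}$, use the identity $\Sigma_\xi(\xi(x)) = \Sigma_\Q(\Q(x))$ to transfer the bound of Theorem~\ref{thm:MvaluedRC} into hypothesis~\eqref{eq:characterization_eigenfunctions}, and conclude via Theorem~\ref{thm:goodRCcondition} with tolerance $\varepsilon/|\lambda_i^\tau|$. The extra care you take regarding well-definedness of $\hat{\varphi}_i$ and the constancy of $\Q$ on its level sets is implicit in the paper but adds nothing different in substance.
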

\begin{proof}
Let $\tilde{\varphi}_i:\M\rightarrow \R$ as in the proof of Theorem~\ref{thm:MvaluedRC}, and define $\hat{\varphi}_i:\Y\rightarrow \R$ via 
$$
\hat{\varphi}_i(y):=\tilde{\varphi}_i\big(\mathcal{E}^{-1}(y)\big).
$$
Note that for any $x\in \X$ holds $\Sigma_\xi\big(\xi(x)\big) = \Sigma_\Q(\Q(x))$. Thus,
\begin{align*}
\int_{\Sigma_\xi(\xi(x))} \big| \varphi_i(x') - \big(\hat{\varphi}_i \circ \xi\big)(x')\big|\ts d\mu_y(x') &= \int_{\Sigma_\Q(\Q(x))} \big| \varphi_i(x') - \big(\tilde{\varphi}_i \circ \Q\big)(x')\big|\ts d\mu_{\Q(x)}(x') \\
& \leq \frac{\varepsilon}{|\lambda_i^\tau|},
\end{align*}
where the last inequality is Theorem~\ref{thm:MvaluedRC}. The assertion now follows from Theorem~\ref{thm:goodRCcondition}.
\end{proof}
If $(\lambda_i^\tau,\varphi_i)$ is dominant, i.e., $\lambda_i^\tau \approx 1$, then the projection error~\eqref{eq:projection_error_eigenvalue} is small. In that case, $\xi:x\mapsto \mathcal{E}\big(\Q(x)\big)$ is indeed a good RC, by Theorem~\ref{thm:projection_error_small}.

\begin{remark}
Any RC of form~\eqref{eq:idealRC} is called an \emph{ideal RC}~\cite{Bittracher2017}. As in practice, however, neither the projection $\Q$ nor the parametrization $\mathcal{E}$ of $\M$ are known, this RC cannot be computed analytically. Instead, for strongly reducible systems, an approximate parametrization of $\M$ is computed by applying manifold learning methods to a finite sample of the fuzzy TM $\widetilde{\M}$~\cite{Bittracher2017,BBS18,BKHKS19}. Our ongoing efforts to extend these techniques to the newly-identified weak reducibility condition will be discussed in the outlook in Section~\ref{sec:outlook}.

\end{remark}

\section{Numerical example: a weakly reducible system}
\label{sec:examples}

In order to compare the strong and weak reducibility condition, we \hlC[C1]{consider a simple two-dimensional metastable system that possesses a one-dimensional RC.} This system, originally considered in~\cite{LeLe10}, is governed by an overdamped Langevin equation \hlC[C13]{of form~\eqref{eq:overdampedLangevin},} where the potential energy function $V$ is given by
$$
V(x) = (x_1^2-1)^2 + 10\ts (x_1^2+ x_2 - 1)^2.
$$
We choose the inverse temperature $\beta=1$, and consider the system on the domain $\X = [-2,2]\times [\hl{-2},2]$ (though no boundary conditions have been enforced in the following computations). \hlC[C1]{The potential $V$, depicted in Figure~\ref{fig:Bananapot}~(a), possesses two local minima in the states $A=(-1,0)$ and $B=(1,0)$. The reaction in question hence is the transition from the area around one minimum (without loss of generality state $A$) to the other (state $B$). The minimum energy pathway (MEP)~\cite{MaFiVaCi06}, which in the zero temperature limit supports almost all reactive trajectories~\cite{Ren03}, is indicated by the white dashed line.}

\begin{figure}
\centering
	\begin{minipage}{.35\textwidth}
	\centering
		\includegraphics[scale=1]{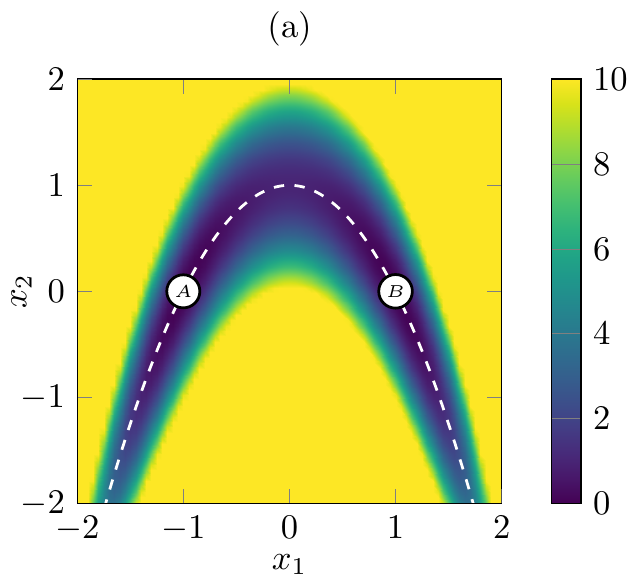}
	\end{minipage}
	\hspace{.1\textwidth}
	\begin{minipage}{.35\textwidth}
		\centering
		\includegraphics[scale=1]{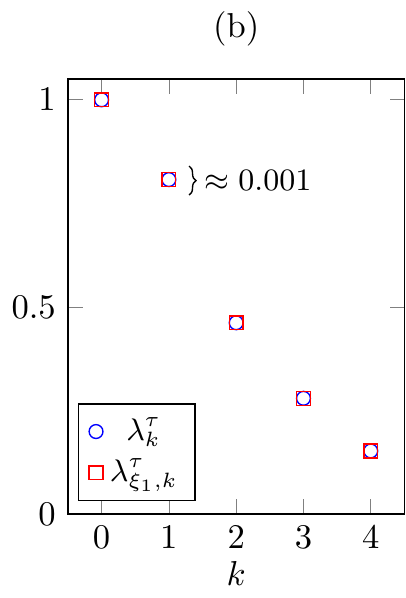}
	\end{minipage}
	\caption{(a) Energy potential of a two-dimensional drift-diffusion system. The reaction of interest here is the transition between the two local minima. \hlC[C1]{(b)~Eigenvalues of the full transfer operator $\mathcal{T}^\tau$ and of the effective transfer operator $\mathcal{T}^\tau_{\xi_1}$ projected onto the computed RC~$\xi_1$.}}
	\label{fig:Bananapot}
\end{figure}

The spectrum of $\mathcal{T}^\tau$ for $\tau = 0.5$, computed by a Ulam method~\cite{Ulam1960} from a long, equilibrated trajectory of the system, exhibits a spectral gap after $\lambda_1$ (Figure~\ref{fig:Bananapot}~(b)). 
The relevant reaction, i.e., the transition between the two metastable sets, is associated primarily with the process on the dominant timescale $t_1$.

The \hlC[C1]{(MEP) of the potential} is given by the set
$$
A_\text{MEP} = \{(x_1,x_2)\in \X~|~x_2 = 1-x_1^2\}.
$$
Intuitively, the manifold 
$$
\M_\text{MEP} = \hlC[C14]{\{ p^\tau(x,\cdot)~|~x\in A_\text{MEP} \}}
$$
should constitute a good TM. This statement should come with a warning: The intuition that the MEP allows to construct a good TM is wrong in general. There are many cases where the relevant transition pathways are completely different from the MEPs of the underlying system, mainly because for finite temperatures all statistically relevant transition paths concentrate in regions not close to the MEP and only converge to the MEP in the limit of zero temperature. In the case considered herein, however, relevant transition paths concentrate around the MEP even for finite temperatures. 

\hlC[C14, C16]{
Before quantitatively assessing whether or not $\M_\text{MEP}$ is indeed is a good TM, we visualize the fuzzy TM \hl{of the system}, i.e., the set $\widetilde{\M}= \{p^\tau(x,\cdot)~|~x\in\X\}$. As $\widetilde{\M}$ lies in the function space $L^1$, it first needs to be embedded into a (finite-dimensional) Euclidean space. This is done by computing the mean of every $p^\tau(x,\cdot)\in\widetilde{\M}$ via the function $\mathbf{m}:L^1\rightarrow \R^2$,
\begin{equation}
\label{eq:meanembedding}
\mathbf{m}\big(p^\tau(x,\cdot)\big) := \int_\X x'\ts p^\tau(x,x')\ts dx'.
\end{equation}
The set $\mathbf{m}\big(\widetilde{\M}\big)$ then serves as the Euclidean embedding\footnote{\hl{While for general dynamics $\mathbf{m}$ is not an embedding of the fuzzy TM in the strict topological sense, we conjecture that in this system, no two transition densities $p^\tau(x_1,\cdot), p^\tau(x_2,\cdot)$ possess the same mean, and hence that $\mathbf{m}$ is homeomorphic on $\widetilde{\M}$ and its image. Still, we neither formally confirm this, nor assess the distortion of $\widetilde{M}$ under $\mathbf{m}$, and hence $\mathbf{m}(\widetilde{\M})$ as a replacement for $\widetilde{M}$ should be handled with care.}} of $\widetilde{\M}$. 

Furthermore, as $\mathbf{m}\big(\widetilde{\M}\big)$ is an infinite set, only a finite subsample can be visualized. For this we draw a large number, specifically $N=8000$, of starting points $\{x_1,\ldots,x_N\}$ uniformly from $\X$ and for each $x_k$ compute $\mathbf{m}_k := \mathbf{m}\big(p^\tau(x_k,\cdot)\big)$.
Here the integral in~\eqref{eq:meanembedding} is approximated via Monte Carlo quadratur, i.e., for $M\gg 1$,
\begin{equation}
\label{eq:monte_carlo_quadrature}
\mathbf{m}\big(p^\tau(x_k,\cdot)\big)  \approx \frac{1}{M}\sum_{l=1}^M z^{(l)}_k,
\end{equation}
where the $z_k^{(l)}$ are samples of the density $p^\tau(x_k,\cdot)$. These were computed numerically by an Euler-Maruyama integrator of~\eqref{eq:overdampedLangevin}, starting in $x_k$, with a different random seed for each $l=1,\ldots,M$.

The points $\mathbf{m}_k$ are shown in Figure~\ref{fig:Bananapot_fuzzyTM}.
}
We observe that most of the $\mathbf{m}_k$ lie close to a parabola-like structure, though there appear to exist systematic outliers, associated with starting points from the high energy regions in the lower part of $\X$. \hlC[C15]{The maximum distance is assumed by the starting point $x^*=(0,-2)$.} The parabola is exactly the Euclidean \hlC[C14]{embedding} of $\M_\text{MEP}$, which is also shown in Figure~\ref{fig:Bananapot_fuzzyTM}.

\begin{figure*}
\centering
	\begin{minipage}{.6\textwidth}
	\centering	
	\includegraphics[scale=1]{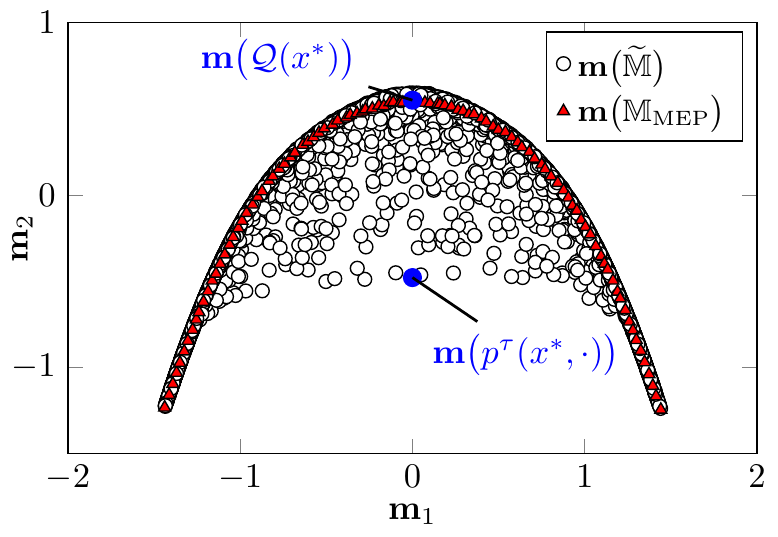}
	\end{minipage}
	\caption{\hlC[C15]{Euclidean embeddings via the mean embedding function $\mathbf{m}$} of the fuzzy TM $\widetilde{\M}$, and the TM based on the minimum energy pathway, $\M_\text{MEP}$. \hlC[C15]{Shown are $N=8000$ sample points of $\mathbf{m}\big(\widetilde{\M}\big)$, and $N=100$ sample points of $\mathbf{m}\big(\M_\text{MEP}\big)$. $\mathbf{m}\big(\widetilde{\M}\big)$ appears to cluster around $\mathbf{m}\big(\M_\text{MEP}\big)$}, except for outliers from the high energy regions below the MEP.}
	\label{fig:Bananapot_fuzzyTM}
\end{figure*}

However, the outliers prevent $\M_\text{MEP}$ from being a good strong TM by Definition~\ref{def:reducibility}. To be precise, for the point $x^*=(0,-3)$, we get for the distance in~\eqref{eq:stronly_reducible}
\begin{equation}
\label{eq:Banana_strongTM_bound}
\big\|\mathcal{Q}(x^*) - p^t(x^*,\cdot)\big\|_{L^2_{1/\mu}} \approx 2.5,
\end{equation}
where again finite samples of $\widetilde{\M}$ and $\M_\text{MEP}$, and kernel density estimations of the $p^t(x,\cdot)$ were used in the computation.
Using~\eqref{eq:Banana_strongTM_bound} as a lower bound for the eigenvalue approximation~\eqref{eq:def_good_RC} via Theorem~\ref{thm:goodRCcondition} and Theorem~\ref{thm:projection_error_small} is of course worthless, hence $\M_\text{MEP}$ is not a strong TM.

On the other hand, for the defining condition~\eqref{eq:weakly_reducible} of weak reducibility holds
\begin{equation}
\label{eq:Banana_weakTM_bound}
\int_{\Sigma_\Q(\Q(x^*))} \big\| \Q(x') - p^\tau(x',\cdot) \big\|_{L^2_{1/\mu}} \ts d\mu_{\Q(x^*)}(x') \approx 0.02
\end{equation}
for the problematic point $x^*$. Assuming this value is indeed an upper bound for~\eqref{eq:weakly_reducible}, the system is weakly reducible with parameter $\varepsilon=0.06$, and $\M_\text{MEP}$ is the corresponding weak TM.
The eigenvalue error for $\lambda_1^\tau$ predicted by Theorem~\ref{thm:goodRCcondition} and Theorem~\ref{thm:projection_error_small} then is
\begin{equation}
\label{eq:predicted_error_bound}
| \lambda_1^\tau - \lambda_{\xi,1}^\tau | \leq 0.06,
\end{equation}
for any RC $\xi$ of the form~\eqref{eq:idealRC}. 

To confirm this error bound, we \hl{now} construct such an RC. For this, \hlC[C17]{a parametrization $\mathcal{E}$ of $\M_\text{MEP}$ must be chosen. Any such parametrization is sufficient, for simplicity we choose 
$$
\mathcal{E}\big(p^\tau(x,\cdot)\big) := x_1,
$$
i.e., the map of $p^\tau(x,\cdot)$ onto the first component $x_1$ of its starting point $x$.}
Next, the projection $\mathcal{Q}$ of $\widetilde{\M}$ onto the TM $\M_\text{MEP}$ is required. In order to avoid the costly calculation of kernel density estimates for the large number of starting points, and to avoid the badly-conditioned scaling by the factor $1/\rho$,
we replace the $L^2_{1/\rho}$ distance in~\eqref{eq:Qprojection} by the Euclidean distance between the mean-embedded densities, i.e., utilize
\hlC[C18]{
$$
\widetilde{\mathcal{Q}}(x) = \argmin_{f\in \M_\text{MEP}} \big\| \mathbf{m}(f) - \mathbf{m}\big(p^\tau(x,\cdot)\big) \big\|_2.
$$
Numerically, this projection is approximated by choosing from the 100 sample points of $\mathbf{m}(\M_\text{MEP})$ that are shown in Figure~\ref{fig:Bananapot_fuzzyTM} the point of minimum distance from $\mathbf{m}(p^\tau(x,\cdot))$. The point $\mathbf{m}(p^\tau(x,\cdot))$ is here again computed via~\eqref{eq:monte_carlo_quadrature}.
}
While using the projection $\widetilde{\Q}$ instead of $\Q$ might slightly distort the computed RC, it will have a negative impact on the quality of the RC, so if the bound~\eqref{eq:predicted_error_bound} holds for $\widetilde{\Q}$, it will hold for $\Q$ as well. Moreover, it has been shown in~\cite{BKHKS19} that the $L^2_{1/\rho}$ distance is equivalent to the distance in certain embedding spaces.

The \hl{final} RC is then given by $\xi_1:x \mapsto \mathcal{E}\big(\widetilde{\mathcal{Q}}(p^\tau(x,\cdot))\big)$. By numerically evaluating $\xi_1$ at the 8000 sample points (where the $p^\tau(x,\cdot)$ are again approximated by finite samples) and interpolating the resulting values bilinearly, we receive a continuous RC on $\X$. Figure~\ref{fig:Bananapot_RCs} shows the level plot of~$\xi_1$.
\hlC[C10]{
We see that the level sets of $\xi_1$ are essentially identical to those of the dominant eigenfunction $\varphi_1$, also shown in Figure~\ref{fig:Bananapot_RCs}. This is not surprising, as~$\xi_1$ is constructed to fulfill the requirements of Theorem~\ref{thm:projection_error_small} , i.e., 
the dominant eigenfunctions are required to be almost invariant under averaging over the level sets of $\xi_1$. As there is only one dominant eigenfunction $\varphi_1$, and $\xi_1$ is also one-dimensional, this implies that the level sets of $\xi_1$ and $\varphi_1$ are almost identical. Note however that the precise ranges of $\xi$ and $\varphi_1$ are not necessarily identical, but strongly depend on the chosen parametrization $\mathcal{E}$.
}

\begin{figure}
\centering
		\includegraphics[scale=1]{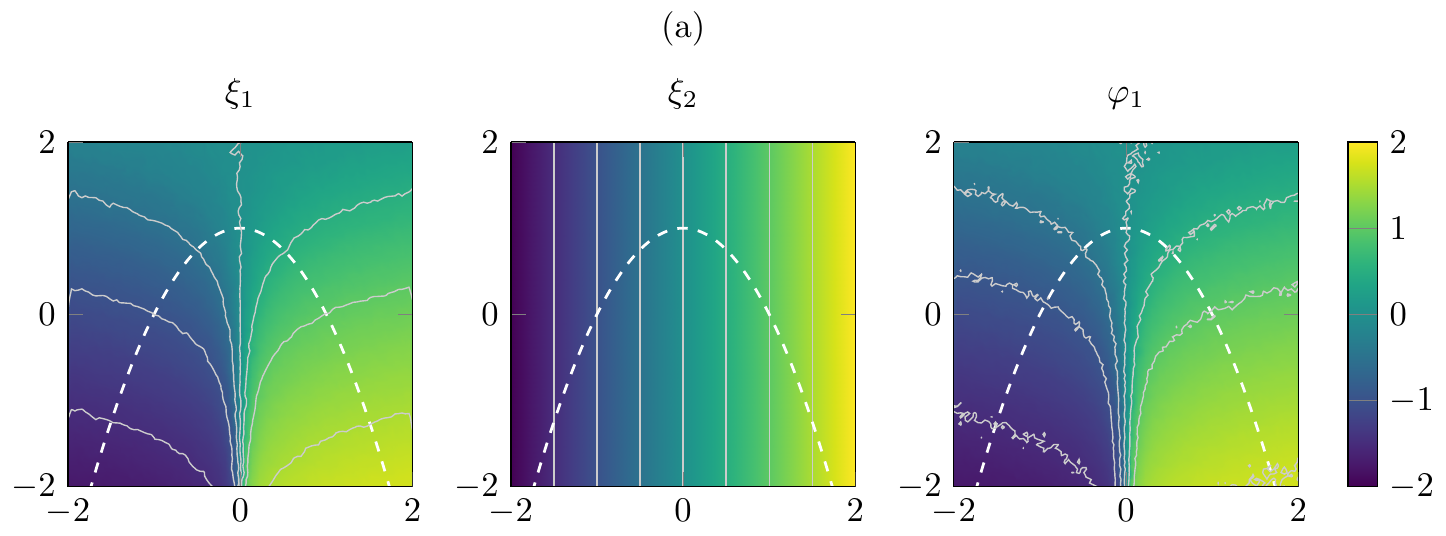}
		\caption{\hlC[C10]{Level plots of the RCs $\xi_1$ computed by the TM method, a naively-constructed RC $\xi_2$, as well as the dominant eigenfunction $\varphi_1$ of $\mathcal{T}^\tau$. We see that the level sets of $\xi_1$ and $\varphi_1$ are essentially identical.}}
	\label{fig:Bananapot_RCs}
\end{figure}

The effective transfer operator $\mathcal{T}^\tau_{\xi_1}$ associated with $\xi_1$ can again be approximated by an Ulam method. Its leading eigenvalues, shown in Figure~\ref{fig:Bananapot_fuzzyTM}~(b), approximate the eigenvalues of the full transfer operator $\mathcal{T}^\tau$ very well. In particular, for the second dominant eigenvalue holds 
$$
|\lambda^\tau_1 - \lambda^\tau_{\xi,1}| \approx 0.001.
$$
\hlC[C1, C3]{As a consequence, the relaxation rate of the projected system $\xi_1(X_t)$, denoted $\sigma_{\xi_1}$ and computed from $\lambda_{\xi,1}$ via~\eqref{eq:eigenvalues_timescales}, also approximate the rate of the full system $\sigma_\text{full}$ very well; we have $\sigma_{\xi_1}\approx 0.43$, $\sigma_{\text{full}}\approx 0.43$ . In contrast, projections onto other, naively chosen RCs, such as 
$$
\xi_2(x):=x_1,
$$
seem to systematically over-estimate the equilibration rate, hence under-estimates the metastability of the system. Specifically, we have $\sigma_{\xi_2}\approx 0.46$. Reduced models built based on $\xi_2$ would therefore run the risk of equilibrating quicker than the full model by artificially increasing the number of transitions.

That said, the difference between $|\sigma_{\xi_1} - \sigma_{\xi_2}| \approx 0.03$ is rather small, so the naive RC $\xi_2$ can already be considered quite good.
The reason is that at low temperatures the dynamics concentrates near the MEP, and here for each level set of $\xi_2$ there exists a level set of $\xi_1$ that is close (in the sense that the minimum pairwise point distance is small), and the RCs are both smooth. Still, the difference is measurable, and this causes the discrepancy.

Overall, this example} confirms that 
\begin{enumerate}
	\item[1)] the RC $\xi_1$ derived from a parametrization of $\M_\text{MEP}$ is good, and
	\item[2)] the error bound~\eqref{eq:predicted_error_bound} derived from the characterization of $\M_\text{MEP}$ as a weak TM is reasonably accurate.
\end{enumerate}

\section{Conclusion and outlook}
\label{sec:outlook}

In this work, we derived an improved and generalized characterization of good reaction coordinates for timescale-separated stochastic processes. 
We built upon a recently developed framework that constructs good RCs from parametrizations of the so-called transition manifold, a potentially low-dimensional manifold in the space of probability densities. We have shown that the criteria on the underlying system to possess such a manifold were overly strict, in the sense that certain systems with demonstrated good reaction coordinates do not possess a transition manifold by the old definition. We thus provided an alternative, relaxed definition of the transition manifold that is applicable to a larger class of systems, while still allowing the construction of good reaction coordinates.

One natural next step would be to implement the novel definition of weak TMs into a data-driven algorithm for the identification of good RCs. Unlike in the toy example from Section~\ref{sec:examples}, the parametrization of the transition manifold (or of a suitable candidate) is not known analytically in practice. Instead, an approximate parametrization is identified by applying a nonlinear manifold learning algorithm to a large sample of $\widetilde{\M}$ (or a suitable embedding thereof)~\cite{BBS18}. Many manifold learning algorithms, such as the diffusion maps algorithm~\cite{CoLa06} can be tuned to ignore outliers, which can be seen as a heuristic way weighing with respect to the invariant measure $\mu$.
A more rigorous approach however would be to directly implement the weighted distance~\eqref{eq:weakly_reducible} into the diffusion maps algorithm. This could be achieved by using the target measure-extension of diffusion maps~\cite{Banisch20}, which at the same time allows one to estimate the in general unknown measure $\mu$ from data.

\section*{Acknowledgements}

This research has been funded by Deutsche Forschungsgemeinschaft (DFG) through grant CRC 1114 ``Scaling Cascades in Complex Systems'', Project B03 ``Multilevel coarse graining of multi-scale problems''.

\bibliographystyle{abbrv}
\bibliography{weakerTM}

\end{document}